\title{Effective Non-vanishing of Asymptotic Adjoint Syzygies}
\author{Xin Zhou}
\address{University of Michigan,
Department of Mathematics,
530 Church St., East Hall,
Ann Arbor, MI 48109-1043,
USA
}
\email{paulxz@umich.edu}
\date{}
\renewcommand{\P}{\mathbb{P}}
\renewcommand{\O}{\mathcal{O}}
\newcommand{\I}{\mathcal{I}}
\newcommand{\ra}{\rightarrow}
\newcommand{\ol}{\overline}
\numberwithin{equation}{section} 
\newtheorem{thm}{Theorem}[section]
\newtheorem{defn}[thm]{Definition}
\newtheorem{prop}[thm]{Proposition}
\newtheorem{lem}[thm]{Lemma}
\newtheorem{rmk}[thm]{Remark}
\newtheorem*{thm*}{Theorem}
\newtheorem*{eg*}{Example}
\begin{document}

\maketitle

\section*{Introduction}

The purpose of this paper is to establish an effective non-vanishing theorem for the syzygies of an adjoint-type line bundle on a smooth variety,  as the positivity of the embedding increases. In particular, we give an answer to Problem 7.9 in \cite{EL} in this setting. 

Let $X$ be a smooth projective variety of dimension $n$ over $\mathbb{C}$, and let $L$ be a very ample line bundle on $X$. Then $L$ defines an embedding:
\[X \hookrightarrow \P^{r(L)}  = \P H^0(X,L) = \textnormal{Proj} \  S\]
where $r(L) = h^0(X,L) - 1$ and $S = \textnormal{Sym} H^0(X,L)$.
Given a divisor $B$ on $X$, write:
\[R(X,B;L) = \bigoplus_m H^0(X, mL + B)\]
which is viewed as a finitely generated graded $S$-module. We will be interested in the syzygies of $R(X,B;L)$ over $S$. Specifically, $R$ has a graded minimal free resolution
\[\mathbb{F}: ... \ra F_p \ra ... \ra F_0 \ra R \ra 0\]
where $F_p = \oplus_j S(-a_{p,j})$ is a free $S$-module.  Write $K_{p,q}(X,B;L)$ for the finite dimensional vector space of minimal $p$-th syzygies of degree $(p+q)$, so that:
\[F_p \cong \bigoplus_q K_{p,q}(X,B;L) \otimes_{\mathbb{C}} S(-p-q)\] 

Ein and Lazarsfeld \cite{EL} studied these groups for $L = L_d = dA$
when $A$ is a very ample line bundle and $d$ is very large. It is elementary that: 
\begin{itemize}
\item For $q > n+1$, $K_{p,q}(X,B;L_d) = 0$
\item For $q = 0$ or $q = n+1$, $K_{p,q}(X,B;L_d) \neq 0$ for only finitely many values of $p$, which can be determined completely.
\end{itemize}
So the interesting question is when $K_{p,q}(X,B;L_d)$ is nonzero for $1 \leq q \leq n$. The first main result of \cite{EL} was that if one fixes $1 \leq q \leq n$, and if $d \gg 0$, then 
\[K_{p,q}(X,B;L_d) \neq 0\]
 for \[O(d^{q-1}) \ \leq \ p \ \leq \ r(d) - O(d^{n-1}).\]
\begin{rmk}\textnormal{
Another result of similar flavor on surfaces is \cite[Prop. 3.4]{EGHP}. 
}
\end{rmk}
The second main result of \cite{EL} was the effective statement for $X = \P^n$, $B \in |\O_{\P^n}(k)|$, $1 \leq q \leq n$ and $d$ large. Specifically,
\[K_{p,q}(\P^n,B;L_d) \neq 0\] 
for 
\small\[{q+d \choose d} - {d-k-1 \choose q} - q \leq p \leq {d+n \choose n} - {d+n-q \choose n-q}+ {k+n \choose n-q} -q -1\]
\normalsize

Our purpose here is to show that for an adjoint type divisor $B = K_X+ bA$ with $b \geq n+1$,
one can in fact obtain an effective statement for arbitrary $X$ which specializes to the statement above on Veronese syzygies. Before stating the theorem, we fix some notations. Given very ample line bundles $H_1,...,H_c$ on $X$, let $Z$ be a complete intersection of divisors from $|H_1|, ..., |H_c|$. Write: 
\[\phi(H_1,...,H_c;L_d) = h^0(Z,L_d).\]
Via the Koszul resolution of $\O_Z$, for sufficiently large $d$, $\phi(H_1,...,H_c;L_d)$ can be expressed, independent of the choice of the particular divisors, which will be the case for us, as an alternating sum of terms of the form $h^0(X,dL-\Sigma_{j \in J} H_j)$, where $J \subseteq \{1,...,c\}$. The following two special cases appear in the statement of our main result:
\[n_d := \phi(-K_X-(n-q)A+B,\underbrace{A,...,A}_{n-q};L_d)\]
\[N_d := \phi((d-q)A-B,\underbrace{A,...,A}_{q};L_d)\]

Our main result is:
\begin{thm*}
Fix $1\leq q \leq n$. Then for sufficiently large $d$, 
\begin{equation}
K_{p,q}(X,B;L_d) \neq 0
\end{equation} for every value of $p$ satisfying:
\begin{equation}\label{range}
n_d - q \leq \ p \ \leq h^0(X,L_d) -N_d - q - 1
\end{equation}
\end{thm*}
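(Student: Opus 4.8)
The plan is to translate the vanishing question for $K_{p,q}$ into the non-vanishing of a single cohomology group of a twisted exterior power of the kernel bundle, to produce the required classes at the \emph{bottom} of the range by restricting to a complete intersection on which $B$ becomes canonical, and to obtain the \emph{top} of the range by Green--Serre duality. Let $M = M_{L_d}$ be the kernel of the evaluation map $H^0(X,L_d)\otimes\O_X \ra L_d$, so that
\[ 0 \ra M \ra H^0(X,L_d)\otimes\O_X \ra L_d \ra 0. \]
The role of the adjoint hypothesis $b\ge n+1$ is that every twist $B + jL_d = K_X + (b+jd)A$ is $K_X$ plus an ample divisor for $j\ge 0$, so Kodaira vanishing gives $H^i(X, B+jL_d) = 0$ for all $i>0$ and $j\ge 0$. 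Feeding these vanishings into the exterior-power Koszul complex of the sequence above yields the standard identification
\[ K_{p,q}(X,B;L_d) \;\cong\; \mathrm{coker}\Big( \wedge^{p+1}H^0(L_d)\otimes H^0(B+(q-1)L_d) \ra H^0\big(\wedge^p M\otimes(B+qL_d)\big)\Big), \]
which by the same vanishings equals $H^1\big(\wedge^{p+1}M\otimes(B+(q-1)L_d)\big)$; so it suffices to show this group is nonzero for $p$ in the stated range.

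\textbf{Step 2 (lower bound via a complete intersection).} For the bottom endpoint, let $Z_1$ be a complete intersection of one divisor in $|{-K_X}-(n-q)A+B|=|(b-n+q)A|$ with $(n-q)$ divisors in $|A|$; it has dimension $q-1$, and by adjunction its canonical bundle is $K_{Z_1} = (K_X + (-K_X-(n-q)A+B) + (n-q)A)|_{Z_1} = B|_{Z_1}$. Thus on $Z_1$ the relevant module is the canonical module, and row $q = \dim Z_1 + 1$ is the top strand of its resolution; by duality on $Z_1$ the extremal syzygy $K_{n_d-q,\,q}(Z_1, K_{Z_1}; L_d|_{Z_1})$ is dual to $K_{0,0}(Z_1,\O_{Z_1};L_d|_{Z_1})\cong\mathbb{C}$, hence nonzero, and $n_d = h^0(Z_1,L_d|_{Z_1})$ is exactly the invariant that places it at $p = n_d - q$. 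Peeling off the defining sections one at a time through the sequences $0 \ra \O(-D)\ra\O\ra\O_D\ra 0$, and using the vanishings of Step 1 to see that the restriction maps on Koszul cohomology are surjective onto this class, transfers the seed to $K_{n_d - q,\,q}(X,B;L_d)\neq 0$. The standard propagation of non-vanishing then spreads it to every $p \ge n_d - q$ within the effective region for row $q$.

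\textbf{Step 3 (upper bound via duality).} For the top endpoint I would invoke Green--Serre duality for Koszul cohomology,
\[ K_{p,q}(X,B;L_d)^\vee \;\cong\; K_{r(d)-n-p,\;n+1-q}\big(X, K_X - B; L_d\big), \qquad r(d)=h^0(X,L_d)-1, \]
available because the vanishings of Step 1 hold for $B$ and, once $d\gg 0$, by Serre vanishing for $K_X - B = -bA$ as well. Under this duality the upper endpoint $p = h^0(X,L_d)-N_d-q-1$ corresponds to the first syzygy in row $n+1-q$ of the dual module $R(X,-bA;L_d)$, which is produced by the argument of Step 2 applied to the complete intersection underlying $N_d = \phi((d-q)A-B, A^q; L_d)$. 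Matching the resulting threshold to $h^0(X,L_d)-N_d-q-1$ is a Riemann--Roch identity relating the $\phi$-function that computes the dual seed to $N_d$; this is a direct, if delicate, manipulation of the Koszul expansions defining $\phi$, and it is here that the precise shapes of $n_d$, $N_d$ and the hypothesis $b \ge n+1$ get pinned down.

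\textbf{Combining the halves and the main obstacle.} Steps 2 and 3 each give non-vanishing on a half of the interval \eqref{range}, and for $d\gg 0$ the two halves overlap — the asymptotic non-vanishing of Ein--Lazarsfeld already guarantees $K_{p,q}\neq 0$ throughout $O(d^{q-1})\le p\le r(d)-O(d^{n-1})$ — so their union is the entire range. I expect the main obstacle to be the class-transfer in Step 2: exhibiting an honestly nonzero Koszul class on $Z_1$ and then controlling the kernels and cokernels of the connecting and restriction maps sharply enough that the non-vanishing survives to $X$ and lands precisely at the threshold $n_d - q$ rather than at a value blurred by lower-order corrections. The duality bookkeeping of Step 3, though formally only Riemann--Roch, is the secondary difficulty, since it is what forces the endpoints to take the clean closed form stated in \eqref{range}.
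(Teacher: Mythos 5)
Your skeleton matches the paper's more closely than you might expect: your $Z_1$ is exactly the complete intersection the paper calls ``adapted to $(X,B,L,n,q)$'', and your adjunction observation $K_{Z_1}=B|_{Z_1}$ is precisely why it works --- the paper's seed class $H^q(X,\I_{Z_1/X}(B))\neq 0$ (computed via the Koszul resolution to be $H^n(X,K_X)$) is, up to the twisted ideal-sheaf sequence, your class $H^{q-1}(Z_1,K_{Z_1})$; likewise your Step 3 duality is Prop.~\ref{duality} after re-indexing, since $K_{r-n-p,\,n+1-q}(X,K_X-B;L_d)=K_{r-n-p,\,n-q}(X,L_d-B+K_X;L_d)$. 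But there are two genuine gaps, and they sit exactly where the paper's real work is. First, the transfer in Step 2 is not a proof: $K_{\bullet,\bullet}(Z_1,K_{Z_1};L_d|_{Z_1})$ is Koszul cohomology over $\mathrm{Sym}\,H^0(Z_1,L_d)$, while $K_{\bullet,\bullet}(X,B;L_d)$ lives over $\mathrm{Sym}\,H^0(X,L_d)$; the sequences $0\ra\O(-D)\ra\O\ra\O_D\ra 0$ induce no map between these groups, and the ``restriction maps on Koszul cohomology'' you invoke do not exist until one constructs them. What the paper actually does is realize $W=H^0(Z,L_d)$ as a quotient of $V=H^0(X,L_d)$ with $Z=X\cap\P(W)$ (this requires $\I_{Z/X}(L_d)$ globally generated, i.e.\ the effective condition that $L_d-nA-B$ be very ample), use the resulting surjection $\wedge^{w}M_{L_d}\ra\I_{Z/X}$, and prove by induction on $q$ --- cutting by hyperplanes, using $H^q(X,\I_{Z/X}(B+A))=0$ from Prop.~\ref{basic adaptive properties} and the surjectivity statements of Rmk.~\ref{surjectivity remark} --- that $H^q(X,\wedge^{w}M_{L_d}(B))\ra H^q(X,\I_{Z/X}(B))$ is surjective (Def.~\ref{def1.2}, Lemma~\ref{inductive lemma}, Prop.~\ref{syzygies carrying for effective d}); only then does Prop.~\ref{prop1.1} convert the seed into $K_{w-q,q}(X,B;L_d)\neq 0$. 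This machinery is the core of the paper and is absent from your sketch.

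Second, the ``standard propagation of non-vanishing to every $p\geq n_d-q$'' does not exist: for fixed $q\geq 2$ there is no general principle that $K_{p,q}\neq 0$ forces $K_{p+1,q}\neq 0$ (even for $q=1$ the known consecutiveness of the linear strand goes downward in $p$, not upward). The paper manufactures a range, rather than a single value, by enlarging the quotient $W$ while keeping $\ol{W}$ fixed (Prop.~\ref{expansion}, after \cite[Thm. 3.11]{EL}), yielding non-vanishing for $h^0(Z,L_d)-q\leq p\leq h^0(X,L_d-A)+h^0(\ol{Z},L_d)-q$. That explicit upper end is what makes the two halves meet: it is of order $d^n$, while the lower end of the dualized range is $r_d-n-h^0(X,L_d-A)-h^0(\ol{Z'},L_d)+(n-q)$, of order $d^{n-1}$ because the $d^n$ terms in $r_d$ and $h^0(X,L_d-A)$ cancel. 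Your substitute --- citing the Ein--Lazarsfeld asymptotic theorem to fill the middle --- cannot close the interval, because its thresholds $O(d^{q-1})$ and $r(d)-O(d^{n-1})$ carry ineffective constants of the same order as your endpoints ($n_d-q$ itself grows like $d^{q-1}$), so no asymptotic statement bridges the stretch between $n_d-q$ and an unknown multiple of $d^{q-1}$. Two smaller points: on the dual side $K_X-B=-bA-P$ is not of adjoint form, so Step 2 cannot be applied to it verbatim; one must work with $B'=L_d-B+K_X$, which is adjoint only for $d\gg 0$. And the case $q=n$ falls outside Prop.~\ref{expansion} ($Z$ would have to be cut by zero divisors in $|A|$); the paper handles it separately, and sharply, via duality and \cite[Prop. 5.1]{EL}.
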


It may be instructive to see an example of how the theorem works.  
Let $X = \P^2$, and put $B = 0$, $A = \O_X(1)$. Then we're looking at the minimal free resolution of the image of $\P^2$ in its $d$-th Veronese embedding, and we work out the statement in the case $q = 1$.
\begin{itemize}
\item In the lower bound, $Z$ is 2 points and $n_d = h^0(Z,\O_{\P^2}(d)) = 2$.
\item In the upper bound, $Z'$ consists of $d$ points and $N_d = h^0(Z, \O_{\P^2}(d)) = d$.

\end{itemize}
So for $d$-th Veronese embedding of $\P^2$ with large $d$, the theorem asserts that:
\small \[K_{p,1}(X,0;dL) \neq 0, \textnormal{for} \  \ 1 \leq p \leq {d+2 \choose 2} - d - 2\]
\normalsize
which was a result of Ottaviani and Paoletti, cf. \cite{OP}. More generally, one can recover $\linebreak$ \cite[Thm. 6.1]{EL}. (See Example \ref{eg of projective space} below.)

The proof of the theorem follows the line of attack of \cite{EL}, which involves constructing secant varieties that exhibit syzygies. The main new observation here is that for adjoint $B$, one can work with secant varieties that do not vary with $d$. This greatly simplifies the calculations, and gives an effective statement which specializes to the case of Veronese embeddings. For a number of facts we use in the proof, we will refer to \cite{EL} when appropriate, instead of repeating the arguments in detail. In \textsection 1, we give a proof of the main theorem. In \textsection 2, we work out some examples. 

I am grateful to Mihai Fulger, Bill Fulton, Thomas Lam, Linquan Ma, Mircea Mustata, Zhixian Zhu for helpful comments and discussions. I am especially indebted to Rob Lazarsfeld for introducing me to the topic, numerous suggestions and encouragements. 

\section{Proof of the main result}

We first give an alternative definition of $K_{p,q}(X,B;L)$ used in the proof. Throughout, $X$ is a smooth complex projective variety of dimension $n$ and $L$ and $A$ are very ample divisors on $X$. (We will be setting $L = dA$ later.) Let \[V = H^0(X,L)\] and write $V_X$ for $V \otimes_{\mathbb{C}} \O_X$, the trivial vector bundle on X modeled on V.

Fix an integer $b \geq n+1$ and set 
\begin{equation}\label{condition on B}
B = K_X + bA + P,
\end{equation}
where $K_X$ is a canonical divisor on X and P is globally generated (we have to include globally generated instead of having only trivial for the application of duality after Prop. 1.12). Note that the higher cohomologies of $B$ vanish thanks to Kodaira vanishing. As in \cite{G} and \cite{EL}, define $K_{p,q}(X,B;L)$ to be the cohomology at the middle, of the following complex:
\[\wedge^{p+1} V \otimes H^0(B + (q-1)L) \ra \wedge^p V \otimes H^0(B + qL) \ra \wedge^{p-1} V \otimes H^0(B + (q+1)L)\]
As motivated in the introduction, we will fix the index $q \in[1,n]$. As is well-known, these Koszul cohomology groups are governed by the cohomologies of the vector bundle $M_L$ on X defined by the exact sequence:
\begin{equation}
0 \ra M_L \ra V_X \ra L \ra 0
\end{equation}

\begin{prop}\label{prop1.1}
For $1 \leq q \leq n$, $K_{p,q}(X,B;L) = H^q(\wedge^{p+q}M_L \otimes \O_X(B))$.
\end{prop}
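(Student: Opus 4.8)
The plan is to deduce the identity from the defining sequence $0 \ra M_L \ra V_X \ra L \ra 0$ by taking exterior powers and chasing cohomology, the one analytic input being a uniform Kodaira vanishing for twists of $B$. First I would record that, since $L$ is a line bundle, for each $k$ this sequence induces $0 \ra \wedge^k M_L \ra \wedge^k V \otimes \O_X \ra \wedge^{k-1}M_L \otimes L \ra 0$, and twisting by $\O_X(B+cL)$ gives $0 \ra \wedge^k M_L \otimes \O_X(B+cL) \ra \wedge^k V \otimes \O_X(B+cL) \ra \wedge^{k-1}M_L \otimes \O_X(B+(c+1)L) \ra 0$. The crucial point is that $H^i(X, B+cL) = 0$ for all $i \geq 1$ whenever $c \geq 0$: indeed $B + cL - K_X = bA + P + cL$ is a sum of an ample and a globally generated divisor, hence ample, so this is Kodaira vanishing. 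Therefore $\wedge^k V \otimes \O_X(B+cL)$ has no higher cohomology for $c \geq 0$, and the long exact sequence attached to the twisted exterior power sequence breaks into the four-term piece $\wedge^k V \otimes H^0(B+cL) \ra H^0(\wedge^{k-1}M_L \otimes \O_X(B+(c+1)L)) \ra H^1(\wedge^k M_L \otimes \O_X(B+cL)) \ra 0$ and the dimension-shift isomorphisms $H^{j-1}(\wedge^{k-1}M_L \otimes \O_X(B+(c+1)L)) \cong H^j(\wedge^k M_L \otimes \O_X(B+cL))$ valid for $j \geq 2$, $c \geq 0$.

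Next I would identify the Koszul cohomology with an $H^1$. Restricting the subbundle inclusion $\wedge^p M_L \otimes \O_X(B+qL) \hookrightarrow \wedge^p V \otimes \O_X(B+qL)$ to global sections and comparing with the Koszul differential $\delta_{p,q}$, one checks a commutative triangle showing that $\delta_{p,q}$ factors through $H^0(\wedge^{p-1}M_L \otimes \O_X(B+(q+1)L))$ followed by the injection into $\wedge^{p-1}V \otimes H^0(B+(q+1)L)$; since that injection is injective on sections, $\ker \delta_{p,q} = H^0(\wedge^p M_L \otimes \O_X(B+qL))$. Feeding this into the four-term piece with $k=p+1$ and $c=q-1\geq 0$, and using the same naturality to recognize the resulting map $\wedge^{p+1}V \otimes H^0(B+(q-1)L) \ra H^0(\wedge^p M_L \otimes \O_X(B+qL))$ as $\delta_{p+1,q-1}$, I would conclude $K_{p,q}(X,B;L) = \operatorname{coker}(\delta_{p+1,q-1}) = H^1(\wedge^{p+1}M_L \otimes \O_X(B+(q-1)L))$.

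Finally I would iterate the dimension-shift isomorphism $q-1$ times, the successive twists $c = q-1, q-2, \ldots, 0$ all remaining $\geq 0$, to climb from $H^1(\wedge^{p+1}M_L \otimes \O_X(B+(q-1)L))$ up to $H^q(\wedge^{p+q}M_L \otimes \O_X(B))$, which is the assertion; for $q=1$ no shifting is needed. The main obstacle is the bookkeeping in the middle step: one must check that the maps induced by the exterior power sequences agree, up to sign, with the Koszul differentials, so that kernels and cokernels match exactly rather than merely up to the ambient inclusions. Everything else is a formal diagram chase resting on the single vanishing statement, and I note that the hypothesis $q \geq 1$ is precisely what keeps $c = q-1 \geq 0$ in the $H^1$ identification (the bound $q \leq n$ plays no role here and enters only later).
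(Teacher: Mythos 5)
Your proposal is correct and takes essentially the same route as the paper: the paper's proof simply defers to \cite[Props.~3.2--3.3]{EL} (see also \cite{EL93}, \cite[Thm.~5.8]{E}), and those results are proved by exactly the argument you give---Kodaira vanishing for $B+cL$ with $c \geq 0$, exterior powers of the sequence defining $M_L$, identification of the Koszul cohomology with $H^1(\wedge^{p+1}M_L \otimes \O_X(B+(q-1)L))$, and then dimension shifting up to $H^q(\wedge^{p+q}M_L \otimes \O_X(B))$. Your closing observation that only $q \geq 1$ is needed for the identification (the bound $q \leq n$ playing no role in this proposition) is also accurate.
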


\begin{proof}
Recalling that $B$ has vanishing higher cohomologies, the conclusion follows as in \cite[Prop. 3.2]{EL}, and the proof of \cite[Prop. 3.3]{EL}. See also \cite[Sect 1]{EL93}, \cite[Thm. 5.8]{E}. 
\end{proof}

Next, we recall the following construction from \cite[\textsection 3]{EL}. Suppose a quotient $\pi: V \twoheadrightarrow W$ of $V$ with $\dim W = w$ and a subscheme $Z$ of $X$ satisfy:
\[Z = X \cap \P(W)\]
in $\P(V)$ scheme-theoretically. Then we have the diagram: 
\begin{equation}
\begin{gathered}
\xymatrix{
0 \ar[r] & M_L \ar[r] \ar[d] & V \otimes O_X \ar[r] \ar[d] & L \ar[r] \ar[d] & 0 \\
0 \ar[r] & \Sigma_W \ar[r] & W \otimes O_X \ar[r] & L \otimes O_Z \ar[r] & 0 \\
}
\end{gathered}
\end{equation}
whose bottom exact sequence defines $\Sigma_W$, a torsion-free sheaf on X of rank $w$. Furthermore, as in \cite[\textsection 3]{EL},  $\wedge^w\Sigma_W$ maps onto $\I_{Z/X}$ and one gets a surjective map:
\begin{equation}
\sigma_\pi : \wedge^w M_L \ra \I_{Z/X}
\end{equation}

We modify \cite[Def. 3.8]{EL}:

\begin{defn}\label{def1.2}\textnormal{
We say that $W$ carries weight $q$ syzygies for $B$ if the map induced by $\sigma_\pi$:
\[H^q(X, \wedge^wM_L(B)) \ra H^q(X, \I_{Z/X}(B)) \]
is surjective. (We also say the same for $q = 0$ for notational convenience even though it isn't necessarily directly related to syzygies. )
}
\end{defn}

\begin{rmk}\label{nonvanishing remark}
\textnormal{
If for some $q \geq 1$, $H^q(X,\I_{Z/X}(B)) \neq 0$ and $W$ carries weight $q$ syzygies for $B$, then combining Prop. \ref{prop1.1} and Def. \ref{def1.2} gives us $K_{w-q,q}(X,B;L) \neq 0$
}
\end{rmk}

The lemma below describes the same kind of inductive behavior as in \cite[Thm. 3.10]{EL}, with our new definition. Let's recall some notations first. Take a general divisor $\ol{X} \in |A|$ so that $\ol{X}$ is smooth, irreducible and so that (1.3) remains exact after tensoring with $O_{\ol{X}}$. Let
\[V' = V \cap H^0(X,I_{\ol{X}/X}(L))\]
with the intersection inside $V$. Set $W' = \pi(V')$. Write
\[\ol{V} = V/V', \ \ol{W} = W/W', \ \ol{L} = L |_{\ol{X}}, \ \ol{B} = B |_{\ol{X}}, \ \ol{Z} = Z \cap \ol{X}, \ \ol{A} = A |_{\ol{X}}, \ \ol{P} = P |_{\ol{X}}\]

As in \cite[(3.14)]{EL}, we get the analogue of (1.3) above for the barred objects and we have the surjection:
\[\ol{\sigma}: \wedge^{\ol{w}} M_{\ol{V}} \ra I_{\ol{Z}/\ol{X}}\]
so we can study the behavior of $\ol{W}$ with respect to carrying syzygies. In fact, the same kind of argument as in the proof of \cite[Thm. 3.10]{EL} yields:

\begin{lem}\label{inductive lemma}
For $q \geq 1$, if $\ol{W}$ carries weight $q-1$ syzygies for $\ol{B} + \ol{A}$ on $\ol{X}$ and if
\[H^q(X,I_{Z/X}(B+A)) = 0,\]
then $W$ carries weight q syzygies for $B$ on $X$.
\end{lem}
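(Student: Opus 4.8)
The plan is to reduce the cohomological degree by one by restricting to the general divisor $\ol{X} \in |A|$, exactly in the spirit of \cite[Thm. 3.10]{EL}: the two objects whose comparison defines ``carrying syzygies,'' namely $\wedge^w M_L$ and $\I_{Z/X}$, both get restricted to $\ol{X}$, and the hypotheses are arranged so that the weight $q-1$ statement on $\ol{X}$ feeds the weight $q$ statement on $X$. First I would record the two restriction sequences attached to $0 \ra \O_X(-A) \ra \O_X \ra \O_{\ol{X}} \ra 0$. Tensoring with the locally free sheaf $\wedge^w M_L(B+A)$ gives
\[0 \ra \wedge^w M_L(B) \ra \wedge^w M_L(B+A) \ra (\wedge^w M_L)|_{\ol{X}}(\ol{B} + \ol{A}) \ra 0,\]
while tensoring with $\I_{Z/X}(B+A)$ gives, for $\ol{X}$ chosen generally with respect to the fixed subscheme $Z$ (so that no Tor terms intervene and $\I_{Z/X}(B+A)|_{\ol{X}} = \I_{\ol{Z}/\ol{X}}(\ol{B} + \ol{A})$, cf. \cite[(3.14)]{EL}),
\[0 \ra \I_{Z/X}(B) \ra \I_{Z/X}(B+A) \ra \I_{\ol{Z}/\ol{X}}(\ol{B} + \ol{A}) \ra 0.\]

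Then I would pass to cohomology. From the second sequence, the hypothesis $H^q(X,\I_{Z/X}(B+A)) = 0$ forces the connecting map
\[\delta_{\I} : H^{q-1}(\ol{X}, \I_{\ol{Z}/\ol{X}}(\ol{B} + \ol{A})) \ra H^q(X, \I_{Z/X}(B))\]
to be surjective. Since $\sigma_\pi$ is a morphism of sheaves, restricting it to $\ol{X}$ and invoking naturality of the connecting homomorphisms produces a commutative square whose bottom edge is $\delta_{\I}$, whose right edge is the map $H^q(\sigma_\pi)\colon H^q(X, \wedge^w M_L(B)) \ra H^q(X, \I_{Z/X}(B))$ that we must show is onto, and whose left edge is $H^{q-1}(\sigma_\pi|_{\ol{X}})$. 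A one-line diagram chase then reduces the problem to proving that the left edge
\[H^{q-1}(\ol{X}, (\wedge^w M_L)|_{\ol{X}}(\ol{B} + \ol{A})) \ra H^{q-1}(\ol{X}, \I_{\ol{Z}/\ol{X}}(\ol{B} + \ol{A}))\]
is surjective: indeed the composite along the bottom is then onto, and by commutativity so is the right edge.

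It remains to analyze $\sigma_\pi|_{\ol{X}}$, which is where the real work lies. The sections in $V' = V \cap H^0(X, I_{\ol{X}/X}(L))$ vanish identically on $\ol{X}$, so the constant subsheaf $V'_{\ol{X}}$ lies in the kernel of the evaluation $V_{\ol{X}} \ra \ol{L}$; choosing a splitting $V = V' \oplus \ol{V}$ I would identify
\[M_L|_{\ol{X}} \ \cong \ M_{\ol{V}} \oplus V'_{\ol{X}},\]
the second summand being trivial of rank $\dim V'$. Taking wedge powers decomposes the source as $\bigoplus_{a+b=w} \wedge^a V' \otimes \wedge^b M_{\ol{V}}$, and the point is that on the summand with $b = \ol{w}$ and $a = \dim W' = w - \ol{w}$ the restricted map $\sigma_\pi|_{\ol{X}}$ agrees, up to the bookkeeping of the trivial wedge factor, with the map $\ol{\sigma}\colon \wedge^{\ol{w}} M_{\ol{V}} \ra \I_{\ol{Z}/\ol{X}}$ coming from the barred diagram of \cite[(3.14)]{EL}. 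Consequently the hypothesis that $\ol{W}$ carries weight $q-1$ syzygies for $\ol{B} + \ol{A}$ --- that is, surjectivity of $H^{q-1}(\ol{\sigma} \otimes \mathrm{id})$ on this summand --- already makes the left edge surjective, and the argument closes.

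The main obstacle is precisely this last compatibility: verifying that under the isomorphism above $\sigma_\pi|_{\ol{X}}$ restricts to $\ol{\sigma}$ on the distinguished summand (equivalently, that the barred sheaf $\Sigma_{\ol{W}}$ is the natural restriction of $\Sigma_W$), together with ensuring that the generality of $\ol{X}$ genuinely removes all Tor contributions so that the ideal sheaves restrict as claimed and the sequence $(1.3)$ stays exact on $\ol{X}$. Both points are of the same nature as the computations in \cite[\textsection 3]{EL}, so I would carry them out by transporting that bookkeeping to the present twist $\ol{B} + \ol{A}$ rather than reproducing it in full.
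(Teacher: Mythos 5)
Your argument is correct and is essentially the paper's own proof, which the paper gives by invoking the argument of \cite[Thm.\ 3.10]{EL}: restrict both $\wedge^w M_L$ and $\I_{Z/X}$ to $\ol{X}$, use the vanishing of $H^q(X,\I_{Z/X}(B+A))$ to make the connecting map onto $H^q(X,\I_{Z/X}(B))$ surjective, and feed in the weight $q-1$ hypothesis through the splitting $M_L|_{\ol{X}} \cong M_{\ol{V}} \oplus V'_{\ol{X}}$ and the compatibility of $\sigma_\pi|_{\ol{X}}$ with $\ol{\sigma}$ on the distinguished summand. The compatibility and Tor-vanishing points you flag at the end are exactly the bookkeeping carried out in \cite[\textsection 3]{EL} (cf.\ (3.14) there), so your proof matches the intended one.
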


\begin{rmk}\textnormal{
The assumption under which we'll apply the lemma is that $X$ has dimension at least 2. (The dimension one case is completely understood, cf. \cite[Prop. 5.1]{EL}.)
For the readers familiar with \cite{EL}, $K_{p,q}$ with the same $q$ are described by cohomology one different in indices in this paper. Compare Rmk. \ref{prop1.1} and \cite[Cor. 3.3]{EL}. This explains why $K_{p,1}$ is treated differently in \cite{EL} (Compare \cite[Thm. 4.1]{EL} with Prop.  \ref{syzygies carrying for effective d}.) 
}\end{rmk}

We next give an analogue of \cite[Def. 5.3]{EL}. Take $(n+1-q)$ divisors 
\[D_1 \in |-K_X -(n-q)A + B| , \ D_2 \dots D_{n+1-q} \in |A|,\] 
and let $Z = D_1 \cap D_2 ... \cap D_{n+1-q}$.
\begin{defn}\textnormal{
We say that Z is adapted to $(X,B,L,n,q)$, if $Z$ is constructed from $X,B,A,n,q$ as above and the $\{D_i\}$ intersect transversely.  
}
\end{defn}

To get started, we take $D_i$ general in its linear series. Then we have:
\begin{prop}\label{basic adaptive properties}\textnormal{
\begin{enumerate}[(i)]
\item One has, for every  $J \subsetneq \{1,2,...n+1-q\}$ and $i > 0$, $$H^i(X, B-\Sigma_{j \in J} D_j) = H^i(X, B + A -\Sigma_{j \in J} D_j)=0.$$
\item $H^q(X, I_{Z/X}(B)) \neq 0$ and $H^q(X, I_{Z/X}(B + A)) = 0$.
\item If $\ol{Z} = Z \cap \ol{X}$, then $\ol{Z}$  is adapted to $(\ol{X},\ol{B} + \ol{A}, \ol{A}, n-1, q-1)$.
\end{enumerate}
}
\end{prop}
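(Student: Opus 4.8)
The three parts are of increasing subtlety, and I would treat (i) as the computational engine, (ii) as the main point, and (iii) as a formal consequence of adjunction. For (i), the plan is to rewrite each twist in adjoint form and invoke Kodaira vanishing. Recalling $B = K_X + bA + P$, we have $D_1 \sim -K_X - (n-q)A + B = (b-n+q)A + P$ while $D_j \sim A$ for $j \geq 2$, so I would split into the cases $1 \notin J$ and $1 \in J$. When $1 \notin J$ one finds $B - \sum_{j \in J} D_j \sim K_X + (b-|J|)A + P$ with $b - |J| \geq b - (n-q) \geq q+1 > 0$; when $1 \in J$, writing $J = \{1\} \cup J'$, one finds $B - \sum_{j \in J} D_j \sim K_X + (n-q-|J'|)A$ with $n-q-|J'| \geq 1 > 0$, using that $J$ is a proper subset. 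In both cases the twist is $K_X$ plus an ample-plus-nef divisor, which is ample since $A$ is very ample and $P$ is nef; hence Kodaira vanishing gives $H^i = 0$ for $i>0$. Adding one more copy of $A$ only increases positivity, so the $B+A$ statement follows identically.

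For (ii), the key observation --- and the reason the choice $D_1 \in |-K_X - (n-q)A + B|$ is made --- is that the $c := n+1-q$ divisors satisfy $\sum_{i=1}^{c} D_i \sim bA + P$, so that $B - \sum_i D_i \sim K_X$. Since $Z$ is a transverse complete intersection, the Koszul complex of the $D_i$ resolves $\I_{Z/X}$, and twisting by $B$ produces an exact complex whose terms are $\mathcal{K}_k = \bigoplus_{|J|=k}\O_X(B - \sum_{j\in J}D_j)$ for $1 \leq k \leq c$, with extreme term $\mathcal{K}_c = \O_X(K_X)$. Breaking this into short exact sequences $0 \ra \mathcal{G}_k \ra \mathcal{K}_k \ra \mathcal{G}_{k-1} \ra 0$ with $\mathcal{G}_0 = \I_{Z/X}(B)$ and $\mathcal{G}_{c-1} \cong \O_X(K_X)$, every middle term $\mathcal{K}_k$ with $1 \leq k \leq c-1$ involves only proper $J$ and hence has no higher cohomology by (i). The connecting maps are therefore isomorphisms in the relevant degrees, and chaining them yields $H^q(\I_{Z/X}(B)) \cong H^{q+c-1}(X, K_X) = H^n(X, K_X) \cong H^0(X,\O_X)^{*} = \mathbb{C} \neq 0$ by Serre duality. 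Running the identical argument after twisting by $A$ replaces the extreme term by $\O_X(K_X + A)$, whose top cohomology vanishes by Kodaira, giving $H^q(\I_{Z/X}(B+A)) \cong H^n(X, K_X + A) = 0$.

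For (iii), I would use adjunction $K_{\ol{X}} = (K_X + A)|_{\ol{X}}$ on the general smooth $\ol{X} \in |A|$ to check that the restricted classes line up. Setting $\ol{D}_i = D_i \cap \ol{X}$, one computes $-K_{\ol{X}} - ((n-1)-(q-1))\ol{A} + (\ol{B}+\ol{A}) = (-K_X - (n-q)A + B)|_{\ol{X}}$, which is exactly the class of $\ol{D}_1$, while $\ol{D}_j \in |\ol{A}|$ for $j \geq 2$, and the number of divisors $c = (n-1)+1-(q-1)$ is correct. Transversality of the $\ol{D}_i$ in $\ol{X}$ follows by choosing $\ol{X}$ general so that it meets $Z$ and all partial intersections of the $D_i$ transversally (Bertini). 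Thus $\ol{Z} = Z \cap \ol{X}$ is adapted to $(\ol{X}, \ol{B} + \ol{A}, \ol{A}, n-1, q-1)$; one also checks $\ol{B} + \ol{A} = K_{\ol{X}} + b\ol{A} + \ol{P}$ remains of adjoint type with the same $b$, so that this Proposition can be re-applied inductively via Lemma \ref{inductive lemma}.

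The main obstacle is part (ii): everything hinges on recognizing that the adjoint normalization forces the extreme Koszul term to be precisely $\O_X(K_X)$, so that the single surviving cohomology group is $H^n(X,K_X) \cong \mathbb{C}$ by Serre duality. The vanishing half requires the parallel but crucially different fact that $\O_X(K_X + A)$ has vanishing top cohomology by Kodaira. This asymmetry between $B$ and $B+A$ --- nonzero in one case, zero in the other --- is exactly what drives the inductive mechanism of Lemma \ref{inductive lemma}, and is the technical heart of the construction.
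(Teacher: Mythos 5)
Your proposal is correct and follows essentially the same route as the paper: Kodaira vanishing (after rewriting the twists in adjoint form) for (i), the Koszul resolution of $\I_{Z/X}$ collapsing to $H^n(X,K_X)\neq 0$ versus $H^n(X,K_X+A)=0$ for (ii), and adjunction plus Bertini for (iii). You simply spell out the case analysis and the short-exact-sequence chase that the paper leaves implicit.
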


\begin{proof}
\begin{enumerate}[(i)]
\item Thanks to the choice of $D_i$, this follows from Kodaira vanishing. 
\item Since $X$ is a smooth projective variety and the $D_i$ meet transversely, $Z$ is a complete intersection. So $I_{Z/X}$ is resolved by the Koszul complex with $j$-th term $\wedge^jE$ where $E = \oplus_{i = 1}^{n+1-q} O_X(-D_i)$. Use the Koszul resolution twisted by $B$ and $(i)$ to find:
\[H^q(X, I_{Z/X} (B)) = H^{q+n+1-q-1}(X, B - (\Sigma_{i=1}^{n+1-q} D_i)) = H^n(X, K_X) \neq 0\]
The other claim follows similarly using Kodaira vanishing. 
\item Using adjunction, we have:
\[\ol{B} = K_{\ol{X}} + (b-1)\ol{A}+ \ol{P}\] 
so $\ol{B}$ has the shape as in \eqref{condition on B}. As $\ol{X}$ is general, and $D_i$'s are general, we can assume $\{\ol{D_i}\}$ meet transversely. (Similarly, in the finite number of steps in the induction, we are free to assume the corresponding divisors intersect transversely.) $\ol{D_1}$ is in the correct linear series by adjunction. The rest is immediate. 
\end{enumerate}
\end{proof}

Now let $L_d = dA$ and we take $d$ large enough so that for any $i$, $H^i(X, L_d-iA) = 0$ (i.e. the embedding $X \subset |A|$ is $d$-regular in the sense of Castelnuovo-Mumford).
\begin{defn}\textnormal{
We say that $d$ satisfies the effective conditions for $B$, if $L_d - nA - B$ is very ample. 
}
\end{defn}

\begin{rmk}\label{surjectivity remark}
\textnormal{Assume that $d$ satisfies the effective conditions for $B$. Note that $\Sigma_{i=1}^cD_i = B - K_X$ by construction. Via the Koszul resolution on $\{D_i\}$ twisted by $L_d$ and using Kodaira vanishing, the following statements hold and furthermore, they hold after cutting down by hyperplanes repeatedly until we reach the base case of the inductive proof in Prop. \ref{syzygies carrying for effective d}: 
\begin{enumerate} 
\item The map $H^0(X,L_d) \ra H^0(Z,L_d)$ is surjective, or equivalently, $H^1(X,I_{Z/X}(L_d)) = 0$.
\item The map $H^0(Z,L_d)  \ra H^0(\ol{Z},L_d)$ is surjective, or equivalently, $H^1(Z,L_d - A) = 0$.
\item $H^1(X,I_{Z/X}(L_d-A)) = 0$ (or equivalently, with $W'$ chosen below, the map $V' \ra W'$ is surjective.)
\item The map $H^0(X,L_d)  \ra H^0(\ol{X}, L_d)$ is surjective, or equivalently $H^1(X,L_d-A) = 0$.
\end{enumerate}
}
\end{rmk}

\begin{rmk}\textnormal{
In \cite{EL}, complete intersections $Z_d$ were chosen that varied with $d$. The surjectivity of the above four maps cannot be guaranteed. This resulted in the ineffectivity and a number of complications which we are able to circumvent as in the proof of Prop. \ref{syzygies carrying for effective d}. 
}
\end{rmk}

\begin{prop}\label{syzygies carrying for effective d}
For $1 \leq q \leq n-1$, if $d$ satisfies the effective conditions for $B$, then $H^0(Z,L_d)$ carries weight $q$ syzygies for $B$.
\end{prop}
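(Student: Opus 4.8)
The plan is to prove this by descending induction on $q$, using Lemma \ref{inductive lemma} to peel off one general hyperplane section $\ol{X} \in |A|$ at a time until we reach a trivial base case. Throughout I take $W = H^0(Z,L_d)$ with $\pi \colon V \to W$ the restriction map; by Remark \ref{surjectivity remark}(1) this $\pi$ is surjective, and for $d$ large the adapted $Z$ is scheme-theoretically cut out in the embedding by the linear forms vanishing on it, so that $Z = X \cap \P(W)$ and the construction preceding Definition \ref{def1.2} applies. I would phrase the inductive statement so that it ranges over all data $(X,B,L_d,n,q)$ with $Z$ adapted to $(X,B,L_d,n,q)$, with $B$ of the adjoint shape \eqref{condition on B}, and with $d$ satisfying the effective conditions.

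The base case is $q=0$. Here the adapted $Z = D_1 \cap \dots \cap D_{n+1}$ is cut out by $n+1$ divisors on the $n$-dimensional $X$, hence is empty; more generally, since each step of the induction keeps $\dim Z = q-1$, the chain terminates exactly at the empty scheme. Thus $W = 0$, $w = 0$, and the weight-$0$ carrying map is the identity $H^0(\O_X(B)) \to H^0(\I_{\emptyset/X}(B))$, which is trivially surjective.

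For the inductive step I fix $q \geq 1$, assume the statement for $q-1$, and apply Lemma \ref{inductive lemma}. Its first hypothesis, $H^q(X,\I_{Z/X}(B+A)) = 0$, is exactly Proposition \ref{basic adaptive properties}(ii). The crux is its second hypothesis, that $\ol{W}$ carries weight $q-1$ syzygies for $\ol{B}+\ol{A}$ on $\ol{X}$; the point is to identify $\ol{W} = W/W'$ with $H^0(\ol{Z},L_d)$ so that the inductive hypothesis applies. I would get this from a diagram chase: with $V' = H^0(X,L_d-A)$, the map $\pi$ carries the sequence $0 \to V' \to V \to \ol{V} \to 0$ onto the hyperplane-restriction sequence $0 \to H^0(Z,L_d-A) \to H^0(Z,L_d) \to H^0(\ol{Z},L_d) \to 0$ on $Z$, the latter being short exact because $H^1(Z,L_d-A)=0$ (Remark \ref{surjectivity remark}(2)). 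Since $\pi$ and the restriction $H^0(X,L_d-A) \to H^0(Z,L_d-A)$ are both surjective (Remark \ref{surjectivity remark}(1),(3)), it follows that $W' = \pi(V') = H^0(Z,L_d-A)$, and hence $\ol{W} = H^0(Z,L_d)/H^0(Z,L_d-A) \cong H^0(\ol{Z},L_d)$. Granting this, Proposition \ref{basic adaptive properties}(iii) says $\ol{Z}$ is adapted to $(\ol{X},\ol{B}+\ol{A},\ol{A},n-1,q-1)$; moreover $\ol{B}+\ol{A} = K_{\ol{X}} + b\ol{A}+\ol{P}$ again has the adjoint shape \eqref{condition on B} (with $b \geq n+1 > n = (n-1)+1$ and $\ol{P}$ globally generated), and the effective conditions descend because $(L_d - nA - B)|_{\ol{X}}$ is again very ample. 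Hence the inductive hypothesis applies to $\ol{W}$, and Lemma \ref{inductive lemma} yields that $W$ carries weight $q$ syzygies for $B$.

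The main obstacle is the identification $\ol{W} \cong H^0(\ol{Z}, L_d)$: it is precisely here that the surjectivity statements of Remark \ref{surjectivity remark} are indispensable, and this is the step that breaks down for the $d$-varying complete intersections of \cite{EL}. Everything else is the bookkeeping needed to confirm that adapted-ness, the adjoint shape \eqref{condition on B}, the effective conditions, and the surjectivities of Remark \ref{surjectivity remark} all persist when passing to $\ol{X}$ — which is exactly what is guaranteed by the stipulation that Remark \ref{surjectivity remark} hold through repeated hyperplane cuts down to the base case.
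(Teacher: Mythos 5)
Your proposal is correct and takes essentially the same approach as the paper: induction on $q$ with the trivial base case of an empty $Z$, the inductive step supplied by Lemma \ref{inductive lemma} together with Proposition \ref{basic adaptive properties}(ii),(iii), and the key identifications $W' = H^0(Z,L_d-A)$, $\ol{W} \cong H^0(\ol{Z},L_d)$ obtained from the surjectivities of Remark \ref{surjectivity remark}. The paper records this more tersely (via the commutative diagram with exact rows and columns and the remark that it persists under hyperplane cuts), but the content, including your checks that the adjoint shape \eqref{condition on B} and the effective conditions descend to $\ol{X}$, matches the paper's argument.
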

\begin{proof}
Set $W = H^0(Z,L_d)$.  When $d$ satisfies the effective conditions for $B$, $L_d-D_i$ is globally generated for all $i$.  So $I_{Z/X} \otimes \O_X(L_d)$ is globally generated. Then $Z = X \cap \P(W)$ scheme-theoretically. 
Furthermore, by Rmk. \ref{surjectivity remark}, we have the following diagram: 
\begin{equation}
\xymatrix{
0 \ar[r] &V'= H^0(X,L_d-A) \ar[r] \ar[d] &  V = H^0(X,L_d) \ar[r] \ar[d] & \ol{V} = H^0(\ol{X},L_d) \ar[d] \ar[r] & 0\\ 
0 \ar[r] & W'=H^0(Z,L_d-A) \ar[r] \ar[r] \ar[d] & W = H^0(Z,L_d) \ar[r] \ar[d] &  \ol{W}=H^0(\ol{Z},L_d) \ar[d] \ar[r] & 0\\
& 0  & 0 & 0\\
}
\end{equation}

Moreover, when we cut down by hyperplanes as in Lemma \ref{inductive lemma}, we obtain the corresponding diagrams in lower dimensions. 

We prove the proposition by induction on $q$. Notice that by construction, $Z$ is always of dimension $q-1$. Our assumption $\dim X \geq 2$ (below Remark 1.3) will always be satisifed, because $\dim X = n \geq q+1 \geq 2$. When $q = 1, Z$ consists of points. $\ol{Z} = \phi, \ol{W} = 0$. So the conclusion is trivially true for $q = 0$. Then the conclusion is true for $q = 1$ by Prop. \ref{basic adaptive properties} (ii) and Lemma \ref{inductive lemma}. Then apply Lemma \ref{inductive lemma} repeatedly. 

\end{proof}

Recalling Rmk. \ref{nonvanishing remark}, the previous proposition gives $K_{p,q}(X,B;L_d) \neq 0$ for a specific $p$. We in fact get a range of non-vanishings by enlarging $W$ but keeping $\ol{W}$ fixed, as in \cite[Thm. 3.11]{EL}. The result is:

\begin{prop}\label{expansion} Fix $1 \leq q \leq n-1$. If $d$ satisfies the effective conditions for $B$, and \linebreak $h^0(X,L_d) - h^0(Z,L_d) > n$ then $K_{p,q}(X,B;L_d) \neq 0$
for $p$ in the range: 
\[h^0(Z,L_d) -q \ \leq \ p \ \leq \ h^0(X,L_d-A) + h^0(\ol{Z},L_d) - q \ \ \ \ \ \  \square\]
\end{prop}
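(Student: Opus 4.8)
The plan is to start from the single non-vanishing already in hand and then sweep out the whole interval by a one-parameter family of enlargements, following the expansion argument of \cite[Thm. 3.11]{EL}. Concretely, Prop. \ref{syzygies carrying for effective d} tells us that $W = H^0(Z,L_d)$ carries weight $q$ syzygies for $B$, and Prop. \ref{basic adaptive properties}(ii) gives $H^q(X,\I_{Z/X}(B)) \neq 0$; so Rmk. \ref{nonvanishing remark} yields $K_{w-q,q}(X,B;L_d) \neq 0$ with $w = \dim W = h^0(Z,L_d)$. This is exactly the left-hand endpoint $p = h^0(Z,L_d) - q$. To reach every larger $p$ up to the stated bound, I would produce, for each intermediate dimension, a quotient $\pi_1 : V \twoheadrightarrow W_1$ that still carries weight $q$ syzygies for $B$ and still has $H^q(X,\I_{Z_1/X}(B)) \neq 0$; Rmk. \ref{nonvanishing remark} then gives $K_{\dim W_1 - q,\,q}(X,B;L_d) \neq 0$, and letting $\dim W_1$ run over the admissible range produces the interval of $p$.

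The enlargements are obtained geometrically by adding general points to $Z$ while keeping $\ol W$ fixed. Let $U = \ker(V \twoheadrightarrow W) = H^0(X,\I_{Z/X}(L_d))$ and $U' = U \cap V' = H^0(X,\I_{Z/X}(L_d-A))$. I would choose a general subspace $U_1 \subseteq U$ surjecting onto the image $\ol U$ of $U$ in $\ol V$, with $U_1 \cap V' = U_1' \subseteq U'$ of arbitrary codimension $0 \le t \le \dim U'$ in $U'$, and set $W_1 = V/U_1$. For $d$ large, $\I_{Z/X}(L_d)$ is globally generated and the general such $U_1$ has base locus $Z_1 = Z \sqcup \Gamma$, where $\Gamma$ consists of $t$ general points lying off $\ol X$; then $Z_1 = X \cap \P(W_1)$ scheme-theoretically, $W_1 = H^0(Z_1,L_d)$, and $\ol{Z_1} = \ol Z$, so $\ol{W_1} = \ol W$ is unchanged. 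As $t$ ranges over $[0,\dim U']$ the dimension $\dim W_1 = h^0(Z,L_d) + t$ runs precisely from $h^0(Z,L_d)$ to $\dim V' + \dim \ol W = h^0(X,L_d-A) + h^0(\ol Z,L_d)$ (using $h^0(Z,L_d) - h^0(Z,L_d-A) = h^0(\ol Z,L_d)$ and $W_1' = V'$ when $t = \dim U'$), which is the asserted range of $p + q$. The hypothesis $h^0(X,L_d) - h^0(Z,L_d) > n$ ensures $\dim U > n$, i.e. there is room to carry out the enlargement and the general choices above exist.

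The non-vanishing $H^q(X,\I_{Z_1/X}(B)) \neq 0$ is the easy half. Tensoring $0 \ra \I_{Z_1/X} \ra \I_{Z/X} \ra \O_\Gamma \ra 0$ by $B$ and using that $\Gamma$ is a finite scheme (so $H^i(\O_\Gamma) = 0$ for $i \geq 1$), the long exact sequence shows $H^q(X,\I_{Z_1/X}(B)) \cong H^q(X,\I_{Z/X}(B))$ for $q \geq 2$ and surjects onto it for $q = 1$; either way it is nonzero by Prop. \ref{basic adaptive properties}(ii).

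The main obstacle is the first half: showing that each $W_1$ continues to carry weight $q$ syzygies for $B$, i.e. that $H^q(X,\wedge^{\dim W_1}M_{L_d}(B)) \ra H^q(X,\I_{Z_1/X}(B))$ stays surjective as points are added. This cannot go through Lemma \ref{inductive lemma}, since its hypothesis $H^q(X,\I_{Z_1/X}(B+A)) = 0$ fails once $\Gamma$ is large (for the Veronese $\P^2$, $q=1$ example one computes $H^1(\I_{Z_1/X}(B+A)) \neq 0$ as soon as $t \geq 2$). Instead I would argue directly, reducing to a single added general point $x$: the snake lemma applied to the defining sequences of $\Sigma_{W_1}$ and $\Sigma_W$ gives $0 \ra \I_{x/X} \ra \Sigma_{W_1} \ra \Sigma_W \ra 0$, which relates $\wedge^{\dim W_1}\Sigma_{W_1}$ to $(\wedge^{\dim W}\Sigma_W)\otimes\I_{x/X}$ and identifies $\sigma_{\pi_1} : \wedge^{\dim W_1}M_{L_d} \ra \I_{Z_1/X} = \I_{Z/X}\cdot\I_{x/X}$ with the $\I_{x/X}$-twist of the map for $W$. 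The delicate step is to transport the surjectivity on $H^q(-\otimes B)$ across this twist — comparing the cohomology of $\wedge^{\dim W_1}M_{L_d}$ with that of $\wedge^{\dim W}M_{L_d}\otimes\I_{x/X}$ through the ideal sheaf of a general point, and then iterating while preserving genericity. This is exactly the cohomological bookkeeping of \cite[Thm. 3.11]{EL}, to which I would appeal; keeping $\ol W$ fixed throughout is what lets that argument run uniformly along the induction on $\dim X$.
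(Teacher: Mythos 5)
Your proposal follows the same route as the paper: the paper's entire proof of Prop.~\ref{expansion} is the sentence preceding it (``enlarging $W$ but keeping $\ol{W}$ fixed, as in \cite[Thm. 3.11]{EL}''), and that is exactly your plan. Your bookkeeping is the right one: the endpoints $h^0(Z,L_d)-q$ and $h^0(X,L_d-A)+h^0(\ol{Z},L_d)-q$ do correspond to $\dim W_1=h^0(Z,L_d)+t$ with $0\le t\le \dim U'$, via the surjectivities of Rmk.~\ref{surjectivity remark}; the nonvanishing of $H^q(X,\I_{Z_1/X}(B))$ via $0\to \I_{Z_1/X}\to \I_{Z/X}\to \O_\Gamma\to 0$ is correct; and your observation that one cannot simply re-apply Lemma~\ref{inductive lemma} to $Z_1$ when $q=1$ (since $H^1(X,\I_{Z_1/X}(B+A))\neq 0$ once $t$ exceeds $h^0(X,\I_{Z/X}(B+A))$) is accurate, and is precisely why the appeal must be to the expansion argument of \cite[Thm. 3.11]{EL} rather than to the inductive lemma. (For $q\ge 2$, by contrast, $H^q(\I_{Z_1/X}(B+A))\cong H^q(\I_{Z/X}(B+A))=0$ automatically, since adding a finite $\Gamma$ only affects $H^0$ and $H^1$.) Since the paper itself delegates the key surjectivity to \cite[Thm. 3.11]{EL}, your doing the same is not a gap relative to the paper.

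The one step that is false as written is the genericity claim: a \emph{general} subspace $U_1\subseteq U$ surjecting onto $\ol{U}$ with $U_1\cap U'$ of codimension $t$ in $U'$ does \emph{not} acquire $t$ new base points --- its base locus is exactly $Z$. Already for $X=\P^2$, $Z=\emptyset$, $t=1$: a general hyperplane $U_1\subset H^0(\O_{\P^2}(d))$ has empty base locus, because a base point $x$ would force $U_1$ to equal the hyperplane of sections vanishing at $x$, i.e. $U_1$ would lie on the two-dimensional image of $X$ in the dual projective space, which is a non-general condition. The construction must be run in the opposite order, as in \cite{EL}: first fix $t$ general points $\Gamma$ disjoint from $\ol{X}\cup Z$, then set $U_1=H^0(X,\I_{(Z\sqcup\Gamma)/X}(L_d))$. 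Since the relevant linear systems have base locus contained in $Z\cup\ol{X}$ and $\Gamma$ is general, $\Gamma$ imposes independent conditions on both $U$ and $U'$; this yields the dimension count $\dim U_1=\dim U-t$, the surjection $U_1\twoheadrightarrow\ol{U}$, and the identifications $W_1=H^0(Z_1,L_d)$, $\ol{W_1}=\ol{W}$ that the rest of your argument uses. With this reversal, your proposal coincides with the proof the paper intends.
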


Now we work to apply duality to the above proposition. In order to prove Thm. \ref{main theorem}, we combine the above proposition with the range we get using duality.

Let $B' = L_d - B + K_X$. Notice that when $d$ is large, $B'$ will be of the form 
\begin{equation}
K_X + b'A + P'
\end{equation}
with $b' \geq n+1$ and $P'$ globally generated. We work with $d$ large enough so that $B'$ is indeed of this form.  

\begin{prop}\label{duality}
For $1 \leq q \leq n$,
\[K_{p,q}(X,B;L_d) = K_{r_d-p-n,n-q}(X,B';L_d)^*\]
where $r_d = h^0(X,L_d)-1$.
\end{prop}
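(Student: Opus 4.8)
The plan is to translate both sides into cohomology of exterior powers of $M_{L_d}$ via Proposition \ref{prop1.1}, and then match them using Serre duality together with the standard self-duality of exterior powers. By Proposition \ref{prop1.1} the left-hand side is
\[K_{p,q}(X,B;L_d) = H^q(X, \wedge^{p+q}M_{L_d} \otimes \O_X(B)),\]
and, since for $d$ large $B'$ has the form $K_X + b'A + P'$ with $b' \geq n+1$ and $P'$ globally generated (so its higher cohomology vanishes), the same proposition applied to $B'$ gives
\[K_{r_d-p-n,\,n-q}(X,B';L_d) = H^{n-q}(X, \wedge^{r_d-p-q}M_{L_d} \otimes \O_X(B')),\]
where I have used $(r_d-p-n)+(n-q) = r_d-p-q$. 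Thus the proposition reduces to the cohomological identity $H^q(X, \wedge^{p+q}M_{L_d} \otimes \O_X(B))^* \cong H^{n-q}(X, \wedge^{r_d-p-q}M_{L_d} \otimes \O_X(B'))$.

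First I would record two facts about $M_{L_d}$. Since the evaluation sequence $0 \ra M_{L_d} \ra V_X \ra L_d \ra 0$ exhibits $M_{L_d}$ as a vector bundle of rank $r_d$ with $V_X$ trivial, one has $\det M_{L_d} = \O_X(-L_d)$. Second, for any bundle of rank $r_d$ the perfect pairing $\wedge^{p+q} M_{L_d} \otimes \wedge^{r_d-p-q} M_{L_d} \ra \det M_{L_d}$ yields
\[(\wedge^{p+q}M_{L_d})^\vee \cong \wedge^{r_d-p-q}M_{L_d} \otimes (\det M_{L_d})^{-1} = \wedge^{r_d-p-q}M_{L_d} \otimes \O_X(L_d).\]
Now I would apply Serre duality to the left-hand group, writing $\omega_X = \O_X(K_X)$:
\[H^q(X, \wedge^{p+q}M_{L_d} \otimes \O_X(B))^* \cong H^{n-q}\bigl(X, (\wedge^{p+q}M_{L_d})^\vee \otimes \O_X(-B) \otimes \omega_X\bigr).\]
Substituting the previous isomorphism, the twisting bundle becomes $\wedge^{r_d-p-q}M_{L_d} \otimes \O_X(L_d - B + K_X) = \wedge^{r_d-p-q}M_{L_d} \otimes \O_X(B')$ by the very definition $B' = L_d - B + K_X$. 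This is exactly the desired identity, and dualizing both sides (all groups being finite-dimensional) gives the statement.

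The steps are individually routine; the points that require care are the cohomology-vanishing hypotheses needed to invoke Proposition \ref{prop1.1} on the $B'$ side --- this is precisely where the globally generated summand $P$ in \eqref{condition on B} is used, guaranteeing that $B'$ again has the required adjoint shape once $d$ is large --- and the boundary index $q = n$, where the dual exponent $n-q$ equals $0$. In that case one must use the evident $H^0$ analogue of Proposition \ref{prop1.1} (valid under the same higher-cohomology vanishing) in place of the stated $q \geq 1$ version. Getting the determinant twist to land exactly on $B'$ is the only genuinely arithmetic step, and the definition of $B'$ is engineered so that it does.
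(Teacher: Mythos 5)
Your proposal is correct, but it takes a genuinely different route from the paper: the paper disposes of this proposition purely by citation --- \cite[Thm. 2.c.6]{G} and \cite[Prop. 3.5]{EL} for $1 \leq q \leq n-1$, combined with \cite[Rmk. 3.4]{EL} for $q = n$ --- whereas you reprove the duality from scratch. What you have written is in essence the standard proof of Green's duality theorem, specialized to the present setting where the adjoint shape of $B$ (and, for large $d$, of $B'$) makes the required vanishings automatic: Proposition \ref{prop1.1} on both sides, Serre duality, and the perfect pairing $\wedge^{p+q}M_{L_d} \otimes \wedge^{r_d-p-q}M_{L_d} \to \det M_{L_d} = \O_X(-L_d)$, with the definition $B' = L_d - B + K_X$ engineered exactly to absorb the twist by $\O_X(L_d + K_X - B)$; your index arithmetic $(r_d-p-n)+(n-q) = r_d-p-q$ checks out. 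Your approach buys self-containedness and makes transparent why $B'$ is defined as it is, at the cost of having to verify the hypotheses of Proposition \ref{prop1.1} on the $B'$ side (which you do note); the paper's citation is shorter but leaves the reader to unwind the conventions of \cite{G} and \cite{EL}. One small correction to your treatment of $q=n$: the $H^0$ analogue of Proposition \ref{prop1.1}, namely $K_{p',0}(X,B';L_d) = H^0(X,\wedge^{p'}M_{L_d}\otimes\O_X(B'))$, is \emph{not} a consequence of higher-cohomology vanishing for $B'$. The kernel of the Koszul differential out of $\wedge^{p'}V \otimes H^0(B')$ is always $H^0(\wedge^{p'}M_{L_d}\otimes\O_X(B'))$; what must be checked is that the incoming differential contributes nothing, i.e. that $H^0(X, B'-L_d) = H^0(X, K_X - B) = 0$. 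This does hold here, since $B - K_X = bA + P$ is ample and hence $K_X - B$ has no sections, so your conclusion stands, but the justification in your final parenthetical is the wrong one.
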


\begin{proof}
See \cite[Thm. 2.c.6]{G}, \cite[Prop. 3.5]{EL} for $1 \leq q \leq n-1$ and combine with \cite[Rmk. 3.4]{EL} for $q = n$. 
\end{proof}

We want to apply Prop. \ref{expansion} to $B'$, let $Z'$ be a complete intersection adapted to $(X,B',A,n,n-q)$. Denote by $D_i'$, the general divisors in the corresponding linear series. 

\begin{lem}\label{B' effective}
If $d$ satisfies the effective condition for $B$, then $d$ satisfies the effective condition for $B'$.
\end{lem}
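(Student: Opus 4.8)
The plan is to unwind both effective conditions and observe that the ample twist $L_d$ cancels, reducing the assertion to a $d$-independent very ampleness statement that is already encoded in the standing hypothesis \eqref{condition on B} on $B$.

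First I would recall that, by definition, $d$ satisfies the effective condition for $B'$ precisely when $L_d - nA - B'$ is very ample. Substituting $B' = L_d - B + K_X$ gives
\[ L_d - nA - B' = L_d - nA - (L_d - B + K_X) = B - K_X - nA, \]
so the two copies of $L_d$ cancel and the divisor whose very ampleness we must verify no longer depends on $d$. Next I would invoke the form $B = K_X + bA + P$ from \eqref{condition on B}, which yields
\[ B - K_X - nA = (b-n)A + P. \]
Since $b \geq n+1$ we have $b - n \geq 1$, so $(b-n)A$ is very ample; as $P$ is globally generated, the sum $(b-n)A + P$ is again very ample, because the sum of a very ample divisor and a base-point-free divisor is very ample. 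Hence $L_d - nA - B'$ is very ample and $d$ satisfies the effective condition for $B'$.

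There is essentially no obstacle here: the only points requiring attention are the bookkeeping that produces the cancellation of $L_d$ and the observation that it is exactly the hypothesis $b \geq n+1$ (rather than merely $b \geq n$) that makes $(b-n)A$ very ample rather than only globally generated. I would note in passing that the assumption that $d$ already satisfies the effective condition for $B$ is used only to place us in the large-$d$ regime where $B'$ acquires the shape $K_X + b'A + P'$ recorded above; the very ampleness of $L_d - nA - B'$ itself, being independent of $d$, holds for every $d$.
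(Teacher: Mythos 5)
Your proof is correct and is essentially identical to the paper's: both cancel $L_d$ via $L_d - nA - B' = B - nA - K_X$ and conclude very ampleness from the shape $B = K_X + bA + P$ with $b \geq n+1$ in \eqref{condition on B}; you merely spell out the last step that the paper leaves implicit. (One small caveat: your closing remark is slightly off, since the paper secures the shape $K_X + b'A + P'$ of $B'$ by a separate largeness assumption on $d$ stated before the lemma, not as a consequence of the effective condition for $B$ --- but this does not affect the proof.)
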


\begin{proof}
By definition of $B'$ and \eqref{condition on B}, $L_d - nA - B' = L_d - nA - (L_d - B + K_X) = B - nA - K_X$, which is very ample. 
\end{proof}

\begin{rmk}\label{rmk 1.14}\textnormal{
Note that when $d$ is large, the other assumption in Prop. \ref{expansion} is also satisfied for $B'$. The interested reader can check this keeping in mind that $Z'$ is always contained in a divisor in $|A|$ and use surjections as those in Rmk. \ref{surjectivity remark}. 
}
\end{rmk}

Finally, we arrive at the main result:
\begin{thm}\label{main theorem}
Fix $1 \leq q \leq n$. For sufficiently large $d$, if 
\[h^0(Z,L_d) - q \leq p \leq h^0(X,L_d) - h^0(Z',L_d) - q - 1\]
then $K_{p,q}(X,B;L_d) \neq 0$
\end{thm}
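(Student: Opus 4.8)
The plan is to realize the interval asserted for $p$ as the union of two overlapping sub-intervals, getting its lower part by applying Proposition~\ref{expansion} directly to $B$ and its upper part by applying the same proposition to $B'$ and then dualizing via Proposition~\ref{duality}. Throughout I take $d$ large enough that the hypotheses of Proposition~\ref{expansion} are met for both $B$ and $B'$; the non-degeneracy $h^0(X,L_d)-h^0(Z,L_d)>n$ holds for large $d$, and its analogue for $B'$ is Remark~\ref{rmk 1.14}. For $1\le q\le n-1$, Proposition~\ref{expansion} applied to $(X,B,L_d)$ gives $K_{p,q}(X,B;L_d)\neq 0$ for
\[ h^0(Z,L_d)-q \ \le \ p \ \le \ h^0(X,L_d-A)+h^0(\ol{Z},L_d)-q, \]
whose left endpoint is already the lower bound in the statement.

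Since $1\le n-q\le n-1$ as well, Lemma~\ref{B' effective} and Remark~\ref{rmk 1.14} let me apply Proposition~\ref{expansion} to $(X,B',L_d)$ with weight $n-q$, where $Z'$ is adapted to $(X,B',A,n,n-q)$; this gives $K_{p',n-q}(X,B';L_d)\neq 0$ for $h^0(Z',L_d)-(n-q)\le p'\le h^0(X,L_d-A)+h^0(\ol{Z'},L_d)-(n-q)$. By Proposition~\ref{duality}, $K_{p,q}(X,B;L_d)\cong K_{r_d-p-n,\,n-q}(X,B';L_d)^{*}$ with $r_d=h^0(X,L_d)-1$, so substituting $p=r_d-n-p'$ turns the bounds on $p'$ into
\[ r_d-h^0(X,L_d-A)-h^0(\ol{Z'},L_d)-q \ \le \ p \ \le \ h^0(X,L_d)-h^0(Z',L_d)-q-1, \]
whose right endpoint is precisely the upper bound in the statement.

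The two sub-intervals cover the whole interval provided the right endpoint of the first is at least the left endpoint of the second minus one. Using the restriction sequence for $\ol{X}\in|A|$ and $H^1(X,L_d-A)=0$ from Remark~\ref{surjectivity remark} to write $h^0(X,L_d)=h^0(X,L_d-A)+h^0(\ol{X},L_d)$, this requirement collapses to
\[ h^0(X,L_d-A)+h^0(\ol{Z},L_d)+h^0(\ol{Z'},L_d)+2 \ \ge \ h^0(\ol{X},L_d). \]
Here the left side grows like $d^{n}$ while the right side grows like $d^{n-1}$, so the inequality holds once $d$ is large; this leading-term comparison is the only place where $d$ is enlarged beyond the effective conditions. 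This disposes of $1\le q\le n-1$.

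The value $q=n$ lies outside the range of Proposition~\ref{expansion} and is the step I expect to be the main obstacle. Now $Z'$ is cut out by $n+1$ divisors, hence empty, so $h^0(Z',L_d)=0$ and the target interval is $[\,h^0(Z,L_d)-n,\ h^0(X,L_d)-n-1\,]$. Its lower endpoint survives unchanged: the induction of Proposition~\ref{syzygies carrying for effective d} runs verbatim at weight $n$, because for $Z\in|B-K_X|$ one has $I_{Z/X}\cong\O_X(K_X-B)$, whence $H^n(X,I_{Z/X}(B))=H^n(X,K_X)\neq 0$ and $H^n(X,I_{Z/X}(B+A))=0$, so $H^0(Z,L_d)$ carries weight $n$ syzygies and Remark~\ref{nonvanishing remark} yields $K_{h^0(Z,L_d)-n,\,n}(X,B;L_d)\neq 0$. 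The remainder of the interval instead maps under Proposition~\ref{duality} to the degenerate weight $n-q=0$ of $B'$: as $B'-L_d=K_X-B$ has no sections, $K_{p',0}(X,B';L_d)$ is the kernel of $\wedge^{p'}V\otimes H^0(B')\to\wedge^{p'-1}V\otimes H^0(B'+L_d)$, and one needs its nonvanishing for all $0\le p'\le h^0(X,L_d)-1-h^0(Z,L_d)=h^0(X,B')-1$. This is the elementary extreme-weight regime recalled in the introduction, governed by $H^0$ rather than by Proposition~\ref{prop1.1}; proving the nonvanishing over this entire range, rather than at the single point furnished by the syzygy-carrying argument, is the part that I expect to demand the most care.
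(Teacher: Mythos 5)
Your argument for $1 \leq q \leq n-1$ is correct and is essentially the paper's own proof: the same two intervals (Proposition \ref{expansion} applied to $B$, and applied to $B'$ in weight $n-q$ followed by Proposition \ref{duality}), and the same asymptotic comparison showing they overlap. Your inequality $h^0(X,L_d-A)+h^0(\ol{Z},L_d)+h^0(\ol{Z'},L_d)+2 \geq h^0(\ol{X},L_d)$ is just a rearranged form of the paper's observation that $r_d - h^0(X,L_d-A) = h^0(\ol{X},L_d)-1$ grows like $d^{n-1}$ while the right endpoint of the first interval grows like $d^n$.

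The case $q = n$, however, is left with a genuine gap, and it is exactly the step you flagged. You correctly reduce, via Proposition \ref{duality}, to showing $K_{p',0}(X,B';L_d) \neq 0$ for all $0 \leq p' \leq h^0(X,B')-1$, but you neither prove this nor invoke anything for it; you only predict that it "will demand the most care." The paper closes this in one stroke by citing \cite[Prop.\ 5.1]{EL}, which determines the weight-$0$ groups completely: $K_{p',0}(X,B';L_d)\neq 0$ if and only if $0 \le p' \le r(B')$. That single citation covers the whole interval at once --- including the lower endpoint, since $h^0(Z,L_d)-n = h^0(X,L_d)-h^0(X,B')-n$ by the restriction sequence for $Z \in |L_d-B'|$ together with Kodaira vanishing for $B'$ --- and moreover shows the statement is sharp for $q=n$. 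Without this result (or an independent proof of the weight-$0$ nonvanishing over the full range), your $q=n$ case is not complete. Separately, your claim that the induction of Proposition \ref{syzygies carrying for effective d} "runs verbatim at weight $n$" is not right: each hyperplane cut invokes Lemma \ref{inductive lemma}, whose standing assumption (see the remark following it) is that the ambient variety has dimension at least $2$; for $q=n$ the chain of cuts would force an application of the lemma on a curve, which is precisely why the paper restricts Propositions \ref{syzygies carrying for effective d} and \ref{expansion} to $q \leq n-1$ and handles $q=n$ purely by duality. This secondary flaw is moot once the weight-$0$ statement is in place, since that statement already yields the lower endpoint.
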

\begin{proof}
For $1 \leq q \leq n-1$, by Prop. \ref{expansion}, we have $K_{p,q}(X,B;L_d) \neq 0$ for
\begin{equation}
h^0(Z,L_d) - q \leq p \leq h^0(X,L_d-A) + h^0(\ol{Z},L_d) - q
\end{equation}
By Lemma \ref{B' effective} and Remark \ref{rmk 1.14}, we can apply Prop. \ref{expansion} to $B'$ and we have nonvanishings for $p$ in range:
\[h^0(Z',L_d) - (n-q) \leq r_d-p-n \leq h^0(X,L_d - A) + h^0(\ol{Z'},L_d) - (n-q)\]
ie. 
\begin{equation}
r_d-n-(h^0(X,L_d - A) + h^0(\ol{Z'},L_d) - (n-q)) \leq p \leq r_d-n-(h^0(Z',L_d) - (n-q))
\end{equation}
Now we show that the right hand side of (1.8) is of higher order than the left hand side of (1.9), so the two ranges overlap. The right hand side of (1.8) has order $O((d-1)^n)=O(d^n)$. On the left hand side of (1.7), terms of order $d^n$ appear in $r_d$ and $h^0(X,L_d-A)$ with the same coefficient and hence cancel. Therefore, the order is bounded by $O(d^{n-1})$. Hence, asymptotically, we have nonvanishing for everything between the left hand side of (1.8) and the right hand side of (1.9).

For $q = n$, we have from Prop. \ref{duality}, and \cite[Prop. 5.1]{EL} that $K_{p,n}(X,B;dL) \neq 0$ if and only if
\[ 0 \leq r(L_d)-p-n \leq r(B').\]
This unwinds to be
\[h^0(X,L_d) - h^0(X,L_d-bA) - n \leq p \leq h^0(X,L_d) - n - 1\]
Therefore, the statement is not only true, but also sharp for $q = n$. 
\end{proof}

\section{Examples}

We conclude by working out the statement of the main theorem in some interesting special cases.

\subsection{Projective space}\label{eg of projective space}

Take $X = \P^n$. Let $B$ be a divisor in $|\O_{\P^n}(k)|$. Assume $k \geq 0$, so that $B$ satisfies \eqref{condition on B}. In this case, $Z$ is a complete intersection of $(n-q)$ divisors in $|\O_{\P^n}(1)|$ and a divisor in $|-K_X-(c-1)L+B| = |\O_{\P^n}(k+q+1)|$. So,
\small\[h^0(Z,dL) = {q+d \choose d} - {d-(k+q+1) + q \choose q}\]\normalsize
$Z'$ is a complete intersection of $q$ divisors in $|\O_{\P^n}(1)|$ and a divisor in $|-K_X-(c'-1)L + B'| = |(d-k-q)L|$. So,
\small\[h^0(Z',dL) = {d+n-q \choose n-q} - {n-q + d-(d-k-q) \choose d-(d-k-q)}\]\normalsize
\small\[ = {d+n-q \choose n-q} - {k+n \choose n-q}\]\normalsize
So by Thm 1.15, for large $d$, $K_{p,q}(X,B;dL) \neq 0$ for $p$ in the range:
\small\[{q+d \choose d} - {d-k-1 \choose q} - q \leq p \leq {d+n \choose n} - {d+n-q \choose n-q}+ {k+n \choose n-q} -q -1\]\normalsize
This corollary of Thm. \ref{main theorem} coincides with the result \cite[Thm. 6.1]{EL}, but the proof is simpler since we are able to take $Z$ independent of $d$ and is a specialization of the general result. 

\subsection{Product of projective spaces}
Let $X = \P^s \times \P^t$, so $n = \dim X = s+t$. Divisors $B$ satisfying \eqref{condition on B} are of type $(u,v)$ with $u \geq t+1, v \geq s+1$. As mentioned in the introduction, we can compute $n_d$, and $N_d$ in the statement of the theorem (cf. \eqref{range}) through the Koszul resolution. Via the Kunneth formula, we find, for $1 \leq q \leq n$, sufficiently large $d$:
\[K_{p,q}(X,B;L_d) \neq 0\]
for $p$ in range:
\small
\[\sum_{i=0}^{s+t-q}(-1)^i{s+t-q \choose i}{d-i+s \choose s}{d-i+t \choose t}\]
\[+\sum_{i=0}^{s+t-q} (-1)^{i+1}{s+t-q \choose i}{d-i-u-q+t+s \choose s}{d-i-v-q+s+t \choose t} - q\]
\[\leq \ p \ \leq\]
\[{d+s \choose s}{d+t \choose t} - \left(\sum_{i=0}^{q}(-1)^i{s+t-q \choose i}{d-i+s \choose s}{d-i+t \choose t}\right)\]
\[-\left(\sum_{i = 0}^q (-1)^{i+1}{s+t-q \choose i}{q+u-i+s \choose s}{q+v-i+t \choose t}\right) - q - 1\]
\normalsize

\begin{rmk}\textnormal{
The minimal free resolutions of classical Segre or multi-Segre embeddings with line bundles of type $(1,...,1)$ are much studied. $\P^{n_1} \times ... \times \P^{n_m}$ is studied by Rubei regarding $N_p$ properties in \cite{R} and \cite{R07}, and Snowden in \cite{S} proves a finiteness theorem as we vary the number of direct product factors and the dimensions of the projective spaces. Netay studies $\P^m \times \P^n$ in \cite{N}, giving an algorithm for computing the groups as representations.  
}
\end{rmk}

\subsection{Grassmannian Gr(2,4)}

Let $X = \textnormal{Gr(2,4)}$, 2 dimensional subspaces of $\mathbb{C}^4$. Then $\textnormal{Pic}(X) = \mathbb{Z}$ and $\O_X(K_X)= \O_X(-4)$. For the Plucker embedding, the embedding line bundle $A = O_X(1)$.  Assume $B$ is of type $\O_X(k)$ where $k \geq 1$ (satisfying \eqref{condition on B}). 
Then:
\small
\[h^0(Z,L_d) = \sum_{i=0}^{4-q} (-1)^i {4-q \choose i}h^0(X,\O_X(d-i)) + \sum_{i=0}^{4-q} (-1)^{i+1} {4-q \choose i}h^0(X,\O_X(d-i-(k+q)))\]
\[h^0(Z',L_d) =\sum_{i=0}^q (-1)^i {q \choose i} h^0(X,\O_X(d-i)) + \sum_{i=0}^q (-1)^{i+1}{q \choose i} h^0(X,\O_X(k+q-i))\]
\normalsize
Using the Borel-Weil-Bott theorem, $H^0(\O_X(m))$ corresponds to $GL_4$-representations corresponding to rectangular Young diagrams with 2 rows and $m$ columns. The dimension of these representations are given by:
\[f(m) := \frac{(m+1)(m+2)(m+2)(m+3)}{12}\]
(cf. \cite[Thm 6.3 (1)]{FH}). Combine with the expressions for $h^0(Z,L_d)$ and $h^0(Z',L_d) $, we get for $1 \leq q \leq 4$ and sufficiently large $d$, $K_{p,q}(X,B;L_d) \neq 0$ for $p$ in range:
\small
\[\sum_{i=0}^{4-q} (-1)^i {4-q \choose i}f(d-i) + \sum_{i=0}^{4-q} (-1)^{i+1} {4-q \choose i}f(d-i-k-q) - q\]
\[\leq \ p \leq \]
\[r(d) - \left(\sum_{i=0}^q (-1)^i {q \choose i} f(d-i) \right) - \left(\sum_{i=0}^q (-1)^{i+1}{q \choose i} f(k+q-i)\right) - q -1 \]
\normalsize
The same can be done in principle for any Grassmannian as requested in \cite[Problem 7.9]{EL}.


\begin{thebibliography}{9}

\bibitem[E-L]{EL} Lawrence Ein and Robert Lazarsfeld, Asymptotic syzygies of algebraic varieties, to appear in Invent. Math. .

\bibitem[E-G-H-P]{EGHP} David Eisenbud, Mark Green, Klaus Hulek and Sorin Popescu, Restricting linear syzygies: algebra
and geometry, Compos. Math. \textbf{141} (2005), 1460-1478.

\bibitem[O-P]{OP} Giorgio Ottaviani and Rafaella Paoletti, Syzygies of Veronese embeddings, Compos. Math. \textbf{125} (2001), 31-37.

\bibitem[G]{G} Mark Green, Koszul cohomology and the geometry of projective varieties, J. Diff. Geom. \textbf{19} (1984), 125-171.

\bibitem[E-L 93]{EL93} Lawrence Ein and Robert Lazarsfeld, Syzygies and Koszul cohomology of smooth projective varieties of arbitrary dimension, Invent. Math. \textbf{111} (1993), 51-67.

\bibitem[E]{E} David Eisenbud, The Geometry of Syzygies, Graduate Texts in Math. 229, Springer, 2005.

\bibitem[R]{R} Elena Rubei, On syzygies on Segre embeddings, Proceedings of the American Mathematical Society, \textbf{130}, n. 12, 3483-3493, 2002. 

\bibitem[R07]{R07} Elena Rubei, Resolutions of Segre embeddings of projective spaces of any dimension, Journal of Pure and Applied Algebra, \textbf{208}, 29-37 (2007)

\bibitem[N]{N} Igor Netay, Syzygy algebras for the Segre embedding, arxiv:1108.3733.

\bibitem[S]{S} Andrew Snowden, Syzygies of Segre embeddings and Delta-modules, to appear.

\bibitem[F-H]{FH} William Fulton, Joe Harris, Representation Theory, A First Course, Graduate Texts in Mathematics, 129, Springer-Verlag, 1991.
\end{thebibliography}
\end{document}